\theoremstyle{plain}
\newtheorem{proposition}{Proposition}
\newtheorem{lemma}{Lemma}
\theoremstyle{definition}
\newtheorem{definition}{Definition}
\def\RR {{\mathbbm R}}
\def\NN {{\mathbbm N}}
\def\btheta {\boldsymbol{\theta}}
\def\Uni {{\sf U}}
\def\Stb {{\sf S}}
\def\EE {{\mathbbm E}}
\def\PP {{\mathbbm P}}
\def\Var{{\mathbbm{V}ar}}
\def\isindist{\overset{\mathcal{D}}{=}}
\def\hasdist{\overset{\mathcal{D}}{\sim}}
\def\iid{\overset{\mathrm{iid}}{\sim}}
\newcommand{\indicator}[1]{\mathbbm{1}\{#1\}}
\def\sign{\mathrm{sign}}
\begin{document}

\title{Monte Carlo Methods for Insurance Risk Computation}

\author{
\parbox{6cm}{
Shaul Bar-Lev\\ 
{\small University of Haifa}\\
{\small Haifa, Israel}\\
{\small\url{barlev@stat.haifa.ac.il}}
}
\and
\parbox{6cm}{
Ad Ridder\\ 
{\small Vrije University}\\
{\small Amsterdam, Netherlands}\\
{\small\url{ad.ridder@vu.nl}}
}
}

\maketitle

\begin{abstract}
In this paper we consider the problem of computing tail probabilities 
of the distribution of a random sum of positive random variables. 
We assume that the individual variables follow a reproducible natural 
exponential family (NEF) distribution, and that the random number has 
a NEF counting distribution with a cubic variance function. 
This specific modelling is supported by data of the aggregated claim 
distribution of an insurance company. Large tail probabilities are 
important as they reflect the risk of large losses, however, 
analytic or numerical expressions are not available. 
We propose several simulation algorithms which are based on an 
asymptotic analysis of the distribution of the counting variable and 
on the reproducibility property of the claim distribution. 
The aggregated sum is simulated efficiently by importance
sampling using an exponential cahnge of measure.
We conclude by numerical experiments of these algorithms.
\end{abstract}

\section{Introduction}\label{s:intro}
Let $Y_1,Y_2,\ldots$ be i.i.d.\ positive 
random variables representing the individual
claims at an insurance company,
and let $N\in\NN_0$
designate the total number of claims occurring during a certain
time period.
The total amount of these claims is 
called the aggregated claim variable, 
denoteb by
\[
S_{N}=\sum_{k=1}^{N}Y_{k}.
\]
A major issue for insurance companies is the uncertainty 
of the occurrence of a large aggregated claim, because,
if this happens, the company faces large losses
that may ultimately lead to a ruin.
Thus, an important quantity to compute is the insurance risk
factor 

\begin{equation}\label{e:aggregatetail}
\ell(x)= \PP\big(\sum\nolimits_{k=1}^{N} Y_{k}>x\big),
\end{equation}
for large levels $x$. 
Because of its importance for insurance companies,
many actuarial studies deal with this problem,
see the monograph of \citet{kaas08}.
However, there are many other practical situations 
in which the object of interest is a random sum
of i.i.d.\ random variables \citep{bahnemann15}.
For instance, 
$S_N$ might represent the total loss of a financial institute
due to defaults of $N$ obligators with credit sizes 
$Y_1,Y_2,\ldots$.

For doing the actual computions of the risk factor,
one needs to fit a model for 
the counting distribution of $N$
and the claim size distribution of $Y$.
Nowadays we see that the Poisson and the Gamma distributions,
respectively, are often being used \citep{bowers97}.
Other proposals include negative binomial for the counting number 
and inverse Gaussian for the claim size.
 
However, due to large uncertainties, many realistic data 
show large overdispersion.
In fact, our study is motivated by available data of a 
car insurance company for which the traditional 
distributions clearly do not fit properly. 
The (empirical) variance of the counting number data shows 
a power law with respect to the (empirical) mean, with a power 
close to three. 
This observation was the reason that 
we decided to consider counting distributions with
tails that go beyond (are heavier than) the Poisson and 
negative binomial. A natural modeling technique to
introduce families of distributions is by 
considering the concept of natural exponential families 
\citep{dunn05,dunn08,letac90,smyth02}. In our case
we are interested in natural exponential families
with cubic variance functions \citep{letac90}. 
Concerning the counting variable $N$,
we shall investigate 
\begin{itemize}
\item
the Abel distribution;
\item
the strict arcsine distribution;
\item
the  Takacs distribution.
\end{itemize}
These are new distributions for insurance modelling, 
and have to our knowledge not been considered before
in computation and simulation studies. As said above,
our objective is to execute numerical computations
of the insurance risk factor, for which we
consider using Monte Carlo simulations,
the main reason being 
that there are no 
analytic expressions available.
Thus, a main part of our paper deals with developing 
the simulation algorithms
for generating samples from these distributions.

\bigskip\noindent
Also concerning the claim size distributions, we 
propose modelling by natural exponential families. 
Specifically, we consider
\begin{itemize}
\item
gamma distribution;
\item
positive stable distributions;
\item
inverse Gaussian distribution.
\end{itemize}
These are well-known distributions in insurance modeling,
for wich simulation algorithms for generating samples 
have been established 
\citep{chambers76,devroye86,michael76,shuster68}.

\bigskip\noindent
In this way, our aggregate models become  
Tweedie models in the sense that both the
distributions of the counting number and the distributions
of the claim size belong to natural exponential families 
\citep{dunn05,dunn08,smyth02}.
Hence, we shall investigate whether the statistical procedures for estimating the parameters in these models can be applied to our data, 
or whether we need to develop other procedures.  
Commonly one models the mean and dispersion 
in terms of risk factors, for instance by regression models or 
by generalized linear models \citep{smyth02}.
However, we propose to directly compute the risk as a tail probability 
of the aggregated claim distribution by executing
Monte Carlo simulations. 

\bigskip\noindent
The simulation algorithm exploits two efficiency improvements
with respect to standard Monte Carlo.
Firstly, the claim size distributions show the reproducibility
property \citep{barlev86}, which says that convolutions
can be considered being transformations of univariates.
Thus, for example, 
a single sample of the inverse Gaussian distribution suffices
for generating a sum of i.i.d.\ inverse Gaussians.
Secondly, we apply importance sampling by implementing a change of
measure which is based on the well-known exponentially tilting
the probability distributions \citep[Chapter VI]{asmussenglynn07}. 
The optimal tilting factor 
is determined by a saddle-point equation, and results in 
a logarithmically efficient estimator.

\bigskip\noindent
The paper is organized as follows. Section \ref{s:nef}
summarizes the concepts of Tweedie NEF distributions, and 
reproducibility. The main contribution of the paper 
is contained in Section \ref{s:counting} where we
analyse the three counting distributions which leads to
the construction of the simulation algorithms for generating
samples. Section \ref{s:claim} summarizes a few 
aspects of the claim distributions. 
The aggregated claim risks  are computed in Section \ref{s:computing}
by Monte carlo simulation using the algorithms that
we have developed. We show how these risks for large levels 
can be computed afficiently  by an appropriate change
of measure for importance sampling. Finally, Section \ref{s:case}
gives details of the data that motivated this work.


\section{Natural Exponential Family and Reproducibility}\label{s:nef}
We summarize some concepts and properties 
of distributions from natural exponential families (NEF),
see \citet{dunn05,dunn08,letac90,smyth02}.

\begin{definition}
Let $\nu$ be a non-Dirac positive Radon measure on $\RR$,
and $L(\theta)=\int e^{\theta x}\,\nu(dx)$ its Laplace transform.
Assuming that 
$\mathtt{int}\,\Theta = \mathtt{int}
\{\theta : L(\theta)<\infty\}\neq\emptyset$,
then the NEF generated by $\nu$ is define by
the probability distributions
\begin{equation}\label{e:nefdef}
\mathcal{F}=\left\{  F_{\theta} : 
F_{\theta}(dx)=e^{\theta x-\kappa(\theta)}\,\nu(dx),
\text{ }\theta\in\Theta\right\},
\end{equation}
where
$\kappa(\theta)=\log L(\theta)$, the cumulant transform of $\nu$,
is real analytic on $\mathtt{int}\,\Theta$.
\end{definition}

\noindent
The generating measure $\nu$ is called also the kernel of the
NEF. We may associate a random variable $X_\theta$ with the
NEF distribution $F_\theta$. Then
\[
\EE[X_\theta] = \kappa'(\theta);\quad
\Var[X_\theta] = \kappa''(\theta).
\]
Note that $\kappa'$ is invertibe, thus we obtain the 
NEF parameter $\theta$ by
\[
\theta=\theta(m) = \big(\kappa'\big)^{-1}(m).
\]
This means that if we let $\kappa(m)=\kappa\big(\theta(m)\big)$, 
we can represent
the NEF equivalently by
\[
\mathcal{F}=\left\{  F_{m} : F_{m}(dx)=e^{\theta(m)
x-\kappa(m)}\,\nu(dx),\text{ }m \in \mathcal{M}\right\},
\]
where $\mathcal{M}$ is the mean domain of $\mathcal{F}$
and is given by $\mathcal{M}=\kappa'(\mathtt{int}\,\Theta)$. 
Finally, if also the variance
$V(m)$ of a NEF distribution is given as function of the mean $m$,
the pair $(V,\mathcal{M})$ uniquely determines a
NEF within the class of NEF's.  
Simple algebra shows that $\theta$ and $\kappa(\theta)$
can be represented in terms of $m$ by

\begin{equation}\label{e:thetakappa}
\begin{split}
\theta_A(m) &= \int \theta'(m)\,dm = \int \frac{1}{V(m)}\,dm+A,\\
\kappa_B(m) &= \int \kappa'(m)\,dm = \int \frac{m}{V(m)}\,dm+B,
\end{split}
\end{equation}
where $A$ and $B$ are constants. Note that these functions are
not unique at this stage. 
The constants $A$ and $B$ need to be chosen such that 
the corresponding $F_\theta$ function is a probability distribution.

\bigskip\noindent
We call the NEF a Tweedie NEF when $V(m)=O(m^r)$,
i.e. a power law \citep{barlev86,jorgensen87,tweedie84}.
Furthermore, the following reproducibility concept 
will be a key element in
our analysis. It has been developed in \citep{barlev86}.

\begin{definition}\label{d:repro}
Let $\mathcal{F}$ be a NEF as in \eqref{e:nefdef}, and 
suppose that $X_1,X_2,\ldots\iid F_\theta$.
Denote $S_n=\sum_{k=1}^n X_k$.
The NEF is said to be reproducible  if there exist
a sequence of real numbers $(c_{n})_{n\geq 1}$, 
and a sequence of mappings $\{g_{n}:\Theta \to \Theta, n\geq 1\}$,
such that 
for all $n\in\NN$ and for all $\theta\in\Theta$

\[
c_{n}S_{n} \hasdist F_{g_{n}(\theta)}\in\mathcal{F}.
\]
\end{definition}

\section{Counting Distributions}\label{s:counting}
In this section we analyse discrete counting NEF's that are given
by a cubic VF (variance function), see \citet{letac90}.
As said in the Introduction, we are motivated by data in a case study
having a variance showing indeed such a power law.
Our distributions will be used for computing the insurance 
risk factor by simulations, and, thus, the issue is
how to generate samples from these distributions.
Our analysis will lead to the construction
of sampling algorithms that are based on the acceptance-rejection
method. As dominating proposal distribution
we can use the same distribution that is used 
to sample from the Zipf distribution \citep{devroye86}.

We analyse the Abel, the arcsine, and the Takacs NEF's,
consecutively. For each NEF we introduce the VF, develop relevant
asymptotics, and then propose our simulatio procedure.


\subsection{Abel NEF}
The VF is given by
\begin{equation}\label{e:abelvar}
V(m) = m\Big(1+\frac{m}{p}\Big)^{2},\quad m>0,p>0.
\end{equation}
By \eqref{e:thetakappa} we deduce that the NEF-parameter function
$\theta_A(m)$ and the cumulant function $\kappa_B(m)$ 
as functions of mean $m$ are derived by

\begin{align}
\theta_A(m)&=\int \frac{1}{V(m)}\,dm=
\int \frac{1}{m\big(1+\frac{m}{p}\big)^{2}}\,dm=
\log \frac{m}{m+p}+\frac{p}{m+p}+ A (\mbox{a constant}) \label{e:abeltheta}\\
\kappa_B(m)&=\int \frac{m}{V(m)}\,dm=
\int \frac{1}{\big(1+\frac{m}{p}\big)^{2}}\,dm=
-\frac{p^{2}}{m+p}+B (\mbox{a constant}). \label{e:abelkappa}
\end{align}

\noindent
The kernel is given in \citep{letac90}:

\begin{equation}\label{e:abelkernel}
\nu(n)=\frac{1}{n!}p(p+n)^{n-1},\quad n\in\NN_0.
\end{equation}

\begin{proposition}
$A=-1$  and $B=p$.
\end{proposition}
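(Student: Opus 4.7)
The plan is to compute the Laplace transform $L(\theta)$ of the kernel $\nu$ in closed form, then read off $\theta$ and $\kappa$ as explicit functions of the mean $m$ and compare term-by-term with \eqref{e:abeltheta}--\eqref{e:abelkappa}.

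The key tool is Abel's generalization of the binomial identity. Let $T(z)$ denote the tree function defined implicitly by $T(z) = z\,e^{T(z)}$. Then for any $a>0$ and $|z| < 1/e$,
\[
e^{aT(z)} \;=\; \sum_{n=0}^{\infty}\frac{a(a+n)^{n-1}}{n!}\,z^{n}.
\]
Applying this with $a = p$ and $z = e^{\theta}$ identifies the Laplace transform of the Abel kernel \eqref{e:abelkernel} as $L(\theta) = e^{pT(e^{\theta})}$, valid on $\theta < -1$, whence $\kappa(\theta) = p\,T(e^{\theta})$.

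Next I would pass to the mean parameterization. Write $T = T(e^{\theta})$, so that the defining equation $T = e^{\theta}\,e^{T}$ gives $\theta = \log T - T$. Differentiating $T = ze^{T}$ in $z$ and substituting $z = Te^{-T}$ yields $T'(z) = e^{T}/(1-T)$, hence
\[
m \;=\; \kappa'(\theta) \;=\; p\,T'(e^{\theta})\,e^{\theta} \;=\; p\,\frac{T}{1-T}.
\]
Solving for $T$ gives $T = m/(m+p)$ and $1-T = p/(m+p)$. Substituting back,
\[
\theta \;=\; \log T - T \;=\; \log\frac{m}{m+p} \;-\; \frac{m}{m+p} \;=\; \log\frac{m}{m+p} \;+\; \frac{p}{m+p} \;-\; 1,
\]
which, compared with \eqref{e:abeltheta}, forces $A = -1$. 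Similarly,
\[
\kappa \;=\; pT \;=\; \frac{pm}{m+p} \;=\; p - \frac{p^{2}}{m+p},
\]
which, compared with \eqref{e:abelkappa}, forces $B = p$.

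The main obstacle is the first step: recognizing that the generating function of the Abel kernel $p(p+n)^{n-1}/n!$ is $e^{pT(z)}$ via the Abel/tree-function identity and checking the domain $e^{\theta} < 1/e$ on which this expansion converges. Once that closed form is in hand, the remaining work is purely algebraic manipulation of the implicit equation $T = ze^{T}$ and matching constants.
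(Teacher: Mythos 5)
Your proof is correct, and it takes a genuinely more self-contained route than the paper. The paper determines $B$ from the normalization identity $\nu(0)=e^{\kappa(0)}$ (Proposition 4.4 of Letac and Mora) and determines $A$ by invoking their Theorem 4.5 together with display (4.27), $e^{\theta(m)+\kappa(m)/p}=g^{-1}\bigl(e^{\kappa(m)/p}\bigr)$ with $g(z)=e^{z}$ --- both used essentially as black boxes. You instead compute the Laplace transform of the kernel explicitly via Abel's generalized binomial identity, obtaining $\kappa(\theta)=p\,T(e^{\theta})$ with $T$ the tree function, and then derive everything ($\theta=\log T-T$, $m=pT/(1-T)$, hence $T=m/(m+p)$) from the implicit equation $T=ze^{T}$. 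Note that the paper's relation (4.27) is exactly your equation $e^{\theta}=Te^{-T}$ in disguise (with $T=\kappa/p$ and $g^{-1}=\log$), so the two arguments are mathematically the same identity approached from opposite ends: the paper imports it, you prove it. What your version buys is independence from the cited reference and an explicit convergence domain $\theta<-1$ (which indeed matches $\theta_A(m)\uparrow -1$ as $m\to\infty$ in the paper's parametrization); what it costs is that you must justify the Abel/tree-function expansion $e^{aT(z)}=\sum_{n\ge 0}a(a+n)^{n-1}z^{n}/n!$, which is standard (Lagrange inversion) but should be cited or proved if this were to replace the paper's argument. Your determination of $B$ also differs slightly in spirit: the paper reads it off from the single value $\nu(0)$, while you get it from the full closed form of $\kappa$; the $n=0$ term of your series silently encodes the same normalization.
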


\begin{proof}
The constant $B$ follows from Proposition 4.4 of [Letac \& Mora, 1990]:

\[
\nu(0)=e^{\kappa(0)}\;\Leftrightarrow\;\;
1 = e^{-p+B}\;\Leftrightarrow\;\; B=p.
\]

\noindent
Hence,

\begin{equation}\label{e:abelkappa}
\kappa(m) = -\frac{p^{2}}{m+p}+p=\frac{mp}{m+p}\;\;
\Rightarrow\;\;\frac{\kappa(m)}{p}= \frac{m}{m+p}. 
\end{equation}

\noindent
The constant $A$ follows from detailed readings of [Letac \& Mora, 1990].
Specifically, Theorem 4.5. The Abel distribution follows by defining
the generating function

\[
g(z) = e^z = \sum_{n=0}^\infty \frac{1}{n!}z^n,
\] 
and 
\[
\nu(n) = \frac{p}{p+n}\,\frac{1}{n!}\,\Big(\frac{d}{dz}\Big)^n 
\, g^{n+p}(z)\Big|_{z=0}.
\]

\noindent
Indeed, you get \eqref{e:abelkernel}. Furthermore, display (4.27) in
[Letac \& Mora, 1990] says

\[
e^{\theta(m) + \kappa(m)/p} = g^{-1}\Big(e^{\kappa(m)/p}\Big).
\]

\noindent
Substituting $g^{-1}=\log$ and the expression for $\kappa(m)/p$
in display \eqref{e:abelkappa}, we get:

\[
e^{\theta(m) + m/(m+p)} = m/(m+p)
\;\;\Leftrightarrow\;\; 
\theta(m) = \log\frac{m}{m+p} - \frac{m}{m+p}=
\log\frac{m}{m+p} + \frac{p}{m+p} -1.
\]

\noindent
Conclusion $A=-1$.
\end{proof}

\medskip\noindent
Define 
\begin{align*}
\nu_0(n) &= \nu(n)e^{-n-p}=\frac{pe^{-p}}{n!}(p+n)^{n-1}e^{-n}\\
\theta(m) &= \theta_A(m)-A=\log \frac{m}{m+p}+\frac{p}{m+p}\\
\kappa(m) &=\kappa_B(m)-B=-\frac{p^2}{m+p}
\end{align*}

\noindent
Then, the NEF Abel counting probability mass function (pmf)
of the associated counting variable $N_\theta$ is:
\begin{equation}\label{e:abelpmf}
\PP(N_\theta=n)=f_\theta(n)=\nu_0(n)\, e^{n\theta(m)-k(m)}.
\end{equation}

\noindent
Conveniently we omit the NEF parameter $\theta$ in our notations
when there is no confusion.

\subsubsection{Analysis}
First we consider an asymptotics of the modeified kernel $\nu_0(n)$,
using the Stirling approximation:
\[
n! \sim \sqrt{2\pi n}\,\big(n e^{-1}\big)^n,\quad n\to\infty,
\]
where $\sim$ means that the ratio converges to 1 (for $n\to\infty$).
This gives
\[
\nu_0(n)\sim \frac{pe^{-p}(p+n)^{n-1}\,e^{-n}}%
{\sqrt{2\pi n}\,\big(n e^{-1}\big)^n}
=\frac{pe^{-p}}{\sqrt{2\pi}}\, \frac{1}{n\sqrt{n}}\,\Big(1+\frac{p}{n}\Big)^{n-1}
\sim \frac{p}{\sqrt{2\pi}}\, \frac{1}{n\sqrt{n}}.
\]
The right-hand side shows correspondence with a
Zipf distribution \citep{devroye86}:
\[
z(n)=\frac{1}{\zeta(3/2)}\, \frac{1}{n\sqrt{n}}, 
\quad n=1,2,\ldots,
\]
where $\zeta(\cdot)$ is the Riemann zeta function. Sampling
from this Zipf distribution is done by an
accept-reject algorithm using the dominating
pmf $b(n)$ of the random variable $\lfloor U^{-2}\rfloor$:
\begin{equation}\label{e:zipfdominator}
b(n)=\frac{1}{\sqrt{n+1}}\,\Big(\sqrt{1+\tfrac1n} - 1\Big),
\quad n\in\NN.
\end{equation}
The multiplication factor for $z(n)\leq cb(n)$ is
\citep{devroye86}
\[
c = \frac{\sqrt{2}}{\zeta(3/2)\,\big(\sqrt{2}-1\big)}.
\]
We show that we can use $b(n)$ also as proposal dominating
pmf for our NEF pmf $f(n)$. However, because the domains of
$b(n)$ and $f(n)$ differ ($\NN$ versus $\NN_0$),
we need a minor tweak. Denote the conditional pmf by
$f(n|n\geq 1) = \PP(N=n|N\geq 1)$.

\begin{lemma}\label{l:abeldominating}
There is a constant $C$ (not dependent on $n$) such that
\[
f(n|n\geq 1)\leq Cb(n),\quad n=1,2,\ldots.
\]
\end{lemma}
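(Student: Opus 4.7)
The plan is to peel off the non-kernel factors of $f(n\mid n\geq 1)$ until only a term comparable to $1/n^{3/2}$ is left, and then show that $b(n)$ itself is of order $1/n^{3/2}$.

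First I would verify that the tilting factor $e^{n\theta(m)}$ is at most one. Writing $u=p/(m+p)\in(0,1)$, we have $\theta(m)=\log(1-u)+u$, and Taylor expansion (or equivalently concavity of $\log$) gives $\theta(m)<0$ for every $m>0$. Consequently $e^{n\theta(m)}\le 1$ uniformly in $n$, and so
\[
f(n\mid n\ge 1)=\frac{\nu_0(n)\,e^{n\theta(m)-\kappa(m)}}{1-f(0)}\;\le\;\frac{e^{-\kappa(m)}}{1-f(0)}\;\nu_0(n)\;=\;K_1\,\nu_0(n),
\]
where $K_1$ depends only on $p$ and $m$ (note that $f(0)=e^{-pm/(m+p)}<1$, so $K_1$ is finite).

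Next I would sharpen the asymptotic $\nu_0(n)\sim (p/\sqrt{2\pi})\,n^{-3/2}$ into a uniform bound. Using the Stirling lower bound $n!\ge\sqrt{2\pi n}\,(n/e)^n$ and the elementary estimate $(1+p/n)^{n-1}\le(1+p/n)^{n}\le e^{p}$, I compute
\[
\nu_0(n)=\frac{p\,e^{-p}\,n^{n-1}(1+p/n)^{n-1}e^{-n}}{n!}\;\le\;\frac{p\,e^{-p}\cdot e^{p}\cdot n^{n-1}e^{-n}}{\sqrt{2\pi n}\,(n/e)^n}\;=\;\frac{p}{\sqrt{2\pi}}\cdot\frac{1}{n^{3/2}},
\]
valid for every $n\ge 1$.

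Finally I would produce a matching lower bound for $b(n)$. Rationalising the numerator gives
\[
b(n)=\frac{1}{\sqrt{n+1}}\cdot\frac{1/n}{\sqrt{1+1/n}+1}=\frac{1}{n\sqrt{n+1}\,\bigl(\sqrt{1+1/n}+1\bigr)},
\]
and for $n\ge 1$ the estimates $\sqrt{n+1}\le\sqrt{2n}$ and $\sqrt{1+1/n}+1\le \sqrt{2}+1$ yield
\[
b(n)\;\ge\;\frac{1}{\sqrt{2}\,(\sqrt{2}+1)}\cdot\frac{1}{n^{3/2}}\;=:\;c_1\cdot\frac{1}{n^{3/2}}.
\]
Combining the three displays gives $f(n\mid n\ge 1)\le (K_1 p)/(c_1\sqrt{2\pi})\,b(n)$, so $C=K_1 p/(c_1\sqrt{2\pi})$ works. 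None of the steps is really an obstacle, but the one that needs the most care is the uniformity in $n$: both the Stirling step (which requires the \emph{lower} bound, not just the asymptotic) and the $n=1$ case of the $(1+p/n)^{n-1}$ estimate must be checked explicitly so that the comparison $\nu_0(n)\le (p/\sqrt{2\pi})\,n^{-3/2}$ holds for every $n\ge 1$ rather than only asymptotically.
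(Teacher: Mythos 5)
Your proof is correct and follows essentially the same route as the paper's: (i) discard the tilting factor via $\theta(m)\le 0$, (ii) apply the Stirling lower bound $n!\ge\sqrt{2\pi n}\,(n/e)^n$ together with $(1+p/n)^{n-1}\le e^p$ to get $\nu_0(n)\le(p/\sqrt{2\pi})\,n^{-3/2}$, and (iii) compare against $b(n)$. The only cosmetic difference is in step (iii): the paper cites Devroye's constant $c=\sqrt{2}/\big(\zeta(3/2)(\sqrt{2}-1)\big)$ for the inequality $z(n)\le c\,b(n)$, whereas you rederive the equivalent lower bound $b(n)\ge\frac{1}{\sqrt{2}(\sqrt{2}+1)}n^{-3/2}=\frac{\sqrt{2}-1}{\sqrt{2}}n^{-3/2}$ directly by rationalising; since $(\sqrt{2}+1)(\sqrt{2}-1)=1$ these are the same constant, so the final $C$ agrees with the paper's.
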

\begin{proof}
Note that $f(n|n\geq 1)=f(n)/\big(1-f(0)\big)$, with
$f(0)=\nu_0(0)e^{-\kappa(m)}=e^{-p-\kappa(m)}$.
Furthermore, for $n\geq 1$ using the lower bound 
$n!\geq \sqrt{2\pi n} \, n^{n}e^{-n}$, we get:
\begin{align*}
& \frac{(p+n)^{n-1}\,e^{-n}}{n!}
\leq \frac{(p+n)^{n-1}}{\sqrt{2\pi}\,n\sqrt{n}\, n^{n-1}}\\
&=\frac{1}{\sqrt{2\pi}\,n\sqrt{n}}\, \big(1+\frac{p}{n}\big)^{n-1}
\leq \frac{1}{\sqrt{2\pi}\,n\sqrt{n}}\,e^p.
\end{align*}
Moreover, clearly 
\[
\theta(m)=\log \frac{m}{m+p}+\frac{p}{m+p}\leq 0.
\]
All together,
\begin{align*}
& f(n|n\geq 1) = \frac{pe^{-p}}{(1-f(0))n!}(p+n)^{n-1}e^{-n}\,
e^{n\theta(m)-k(m)}\\
&\leq \frac{pe^{-\kappa(m)}}{(1-f(0))\sqrt{2\pi}\,n\sqrt{n}}
=\frac{\zeta(3/2)pe^{-\kappa(m)}}{(1-f(0))\sqrt{2\pi}}\, z(n)\\
&\leq \frac{\zeta(3/2)pe^{-\kappa(m)}}{(1-f(0))\sqrt{2\pi}}\, 
\frac{\sqrt{2}}{\zeta(3/2)\,\big(\sqrt{2}-1\big)}\,b(n)
=\underbrace{\frac{pe^{-\kappa(m)}}{(1-f(0))\sqrt{\pi}\,
\big(\sqrt{2}-1\big)}}_{=C}\,b(n),
\end{align*}
where $e^{-\kappa(m)} = e^{p^2/(p+m)}$.
\end{proof}

\medskip\noindent
\textbf{Remark.}
Note that when $m\gg p$, the constant $C$ is of order $p$ which
is reflected in the acceptance probability $1/C$ in the
accept-reject sampling algorithm. However,
for large dispersion parameters $p$ the larger constant $C$ deteriorates 
this algorithm. In that case one might improve 
bounding the kernel and the probabilities. 
Our case study gave $m\gg p$, so we decided to implement the bounding 
as given above. That gave acceptance probability $0.25$.

\medskip\noindent
Summarizing, the Monte Carlo algorithm for simulating from the Abel
distribution \eqref{e:abelpmf} becomes:

\begin{center}
\begin{boxedminipage}{13cm}
\textsc{Acceptance-Rejection Algorithm for NEF Abel Distribution}
\begin{algorithmic}[1]
\State Generate $U\hasdist\Uni(0,1)$.
\If{$U<f(0)$}
\State $N\gets 0$.
\Else
\Repeat
\State Generate $U\hasdist\Uni(0,1)$.
\State Set $N=\lfloor U^{-2}\rfloor$.
\State Compute $P=\frac{f(N|N\geq 1)}{C\,b(N)}$.
\State Generate $U\hasdist\Uni(0,1)$.
\Until{$U < P$}
\EndIf
\State \textbf{return} $N$.
\end{algorithmic}
\end{boxedminipage}
\end{center}


\subsection{Arcsine NEF}
The VF is given by
\begin{equation}\label{e:arcsinvar}
V(m) = m\Big(1+\frac{m^2}{p^2}\Big)=\frac{m}{p^2}(m^2+p^2),\quad m>0,p>0.
\end{equation}
By \eqref{e:thetakappa} we deduce that the NEF-parameter function
$\theta_A(m)$ and the log-moment generating function $\kappa_B(m)$ are derived by

\begin{align}
\theta_A(m)&=\int \frac{1}{V(m)}\,dm=
\int \frac{p^2}{m(m^2+p^2)}\,dm=
\ln m-\frac12\log(m^2+p^2)+ A (\mbox{a constant})\nonumber \\
& = -\frac12\log(1+(p^2/m^2))+ A (\mbox{a constant}) \label{e:arcsintheta}\\
\kappa_B(m)&=\int \frac{m}{V(m)}\,dm=
\int \frac{1}{\big(1+\frac{m^2}{p^2}\big)}\,dm=
p \,\arctan(m/p)+B (\mbox{a constant}). \label{e:arcsinkappa}
\end{align}

\noindent
The kernel is given in \citep{letac90}:

\begin{align*}
\nu(2n)&=\frac{1}{(2n)!}\prod_{i=0}^{n-1}\big((2i)^2+p^2\big)\\
\nu(2n+1)&=\frac{p}{(2n+1)!}\prod_{i=0}^{n-1}\big((2i+1)^2+p^2\big).
\end{align*}

\begin{proposition}
$A=0$  and $B=0$.
\end{proposition}

\begin{proof}
The constant $B$ follows from Proposition 4.4 of [Letac \& Mora, 1990]:

\[
\nu(0)=e^{\kappa(0)}\;\Leftrightarrow\;\;
1 = e^{B}\;\Leftrightarrow\;\; B=0.
\]

\noindent
Hence,

\begin{equation}\label{e:arcsinekappa}
\kappa(m) = p \,\arctan(m/p)\;\;
\Rightarrow\;\;\frac{\kappa(m)}{p}= \arctan(m/p). 
\end{equation}

\noindent
The generating function of the arcsine kernel is
(see Example C in [Letac \& Mora, 1990])
\[
f(z)=\sum_{n=0}^\infty\nu(n)z^n = e^{p\arcsin z}.
\]

\noindent
Because, $\kappa(\theta)=\log f(e^\theta)$, and 
$\kappa(m)=\kappa\big(\theta(m)\big)$, we get

\begin{align*}
&\kappa(m)=\log e^{p\arcsin e^{\theta(m)}}
=p\arcsin e^{\theta(m)}\\
&=p\arcsin e^{-\log\sqrt{1+(p^2/m^2)}+ A }
=p\arcsin \frac{e^A}{\sqrt{1+(p^2/m^2)}}\\
& \Rightarrow\;\; \sin \frac{\kappa(m)}{p}=\frac{e^A}{\sqrt{1+(p^2/m^2)}}.
\end{align*}

\noindent
According to display \eqref{e:arcsinekappa}:
\[
\sin \frac{\kappa(m)}{p}=\sin \arctan(m/p)=
\frac{m/p}{\sqrt{1+(m^2/p^2)}},
\]
the last equation a well-known identity of trigonometric functions.
Equating:
\[
\frac{e^A}{\sqrt{1+(p^2/m^2)}}=\frac{m/p}{\sqrt{1+(m^2/p^2)}}
\;\;\Leftrightarrow\;\; e^A=1.
\]

\noindent
Conclusion $A=0$.
\end{proof}

\noindent
Denote $\theta(m)=\theta_0(m)$, and $\kappa(\theta)=\kappa_0(m)$.
Hence, we get the NEF Arcsine counting probability mass function (pmf)
of the counting variable $N$ (omitting NEF parameter $\theta$
in the index notation):
\begin{equation}\label{e:arcsinepmf}
\PP(N=n)=f(n)=\nu(n)\, e^{n\theta(m)-k(m)},\quad  n\in\NN_0.
\end{equation}

\subsubsection{Analysis}
In the Appendix we show
that there is a constants $K$ such that for $n=1,2,\ldots$
\[
\nu(2n)\leq K\frac{1}{n\sqrt{n}}.
\]
Thus, for these even terms we recognize again
the Zipf distribution. This will be helpfull to
find a dominating proposal distribution.

\begin{lemma}
Define the double Zipf dominating distribution $b_2(n), n=2,3,\ldots$
by
\[
b_2(2n)=b_2(2n+1)=\frac12b([n/2]),
\]
where $b(n), n=1,2,\ldots$ is the pmf that dominates the Zipf pmf, 
defined in \eqref{e:zipfdominator}.
Then there is a constant $C$ such that
\[
f(n|n\geq 2)=\PP(N=n|N\geq 2)\leq Cb_2(n),\quad n=2,3,\ldots.
\]
\end{lemma}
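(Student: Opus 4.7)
The plan is to mirror the structure of Lemma \ref{l:abeldominating}. First I would write
\[
f(n\mid n\geq 2) = \frac{f(n)}{1-f(0)-f(1)},\qquad n\geq 2,
\]
and note that the denominator is a positive constant in $n$. From \eqref{e:arcsintheta} with $A=0$, $\theta(m) = -\tfrac12\log(1+p^2/m^2) < 0$, and from \eqref{e:arcsinekappa}, $\kappa(m) = p\arctan(m/p) \geq 0$. Hence $e^{n\theta(m)-\kappa(m)} \leq 1$, which reduces the task to bounding $\nu(n)$ for $n\geq 2$.

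Next, I would establish kernel estimates of the form $\nu(n) = O(k^{-3/2})$ when $n = 2k$ or $n = 2k+1$. For even indices this is precisely the appendix bound $\nu(2k) \leq K/(k\sqrt{k})$. For odd indices I would derive the analogous estimate starting from
\[
\nu(2k+1) = \frac{p}{(2k+1)!}\prod_{i=0}^{k-1}\bigl((2i+1)^2+p^2\bigr),
\]
factoring the product as
\[
\Bigl(\prod_{i=0}^{k-1}(2i+1)\Bigr)^{\!2}\prod_{i=0}^{k-1}\Bigl(1+\frac{p^2}{(2i+1)^2}\Bigr),
\]
bounding the second factor uniformly in $k$ by $\exp(p^2\pi^2/8)$, and invoking the identity $\prod_{i=0}^{k-1}(2i+1) = (2k)!/(2^k k!)$. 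A direct Stirling computation on $(2k)!$, $k!$ and $(2k+1)!$ then yields $\nu(2k+1) \leq K'/(k\sqrt{k})$ for a suitable constant $K'$.

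Given these two estimates, I would translate $k^{-3/2}$ decay into $b(k)$ using the Zipf domination inequality $z(n) \leq c\, b(n)$ cited just before \eqref{e:zipfdominator}: since $z(k) = 1/(\zeta(3/2)k\sqrt{k})$, we have $1/(k\sqrt{k}) \leq \zeta(3/2)\, c\, b(k)$. Combining with the conditional bound gives
\[
f(2k\mid N\geq 2) \leq \frac{K\zeta(3/2)c}{1-f(0)-f(1)}\, b(k) = \frac{2K\zeta(3/2)c}{1-f(0)-f(1)}\, b_2(2k),
\]
and analogously $f(2k+1\mid N\geq 2) \leq \bigl(2K'\zeta(3/2)c/(1-f(0)-f(1))\bigr)\, b_2(2k+1)$. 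Taking $C$ to be the maximum of these two constants completes the argument.

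The main obstacle is the odd-index kernel bound: the appendix only treats $\nu(2k)$, so the parallel derivation for $\nu(2k+1)$ requires the extra Stirling-based bookkeeping sketched above, which is slightly more delicate than the even case because of the asymmetric product $\prod(2i+1)$ versus $\prod(2i)$. Everything else reduces to substitution and invoking the Zipf envelope already used for the Abel NEF.
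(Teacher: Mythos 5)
Your structural setup exactly mirrors the paper's argument: write $f(n\mid n\geq 2) = f(n)/\bigl(1-f(0)-f(1)\bigr)$, kill the exponential factor using $\theta(m)\le 0$ (and, in your case, also $\kappa(m)\ge 0$), reduce to a $k^{-3/2}$ bound on the kernel, and pass through $z(k)\le c\,b(k)$ to the dominating $b_2$. For the even indices you and the paper both invoke the Appendix bound $\nu(2k)\le K k^{-3/2}$.

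Where you genuinely diverge is the odd-index case. The paper handles it indirectly: it posits a threshold $i^*$ such that $f(2n+1)<f(2n)$ for $n>i^*$ (so the even bound carries over for free), and asks for a brute-force constant $K_0$ with $\nu(2n+1)\le K_0 n^{-3/2}$ on the finite range $n\le i^*$; however, the monotonicity claim $f(2n+1)<f(2n)$ beyond $i^*$ is asserted rather than derived. You instead attack $\nu(2k+1)$ head-on: factor the product as
\[
\prod_{i=0}^{k-1}\bigl((2i+1)^2+p^2\bigr)
=\Bigl(\prod_{i=0}^{k-1}(2i+1)\Bigr)^{2}\,
\prod_{i=0}^{k-1}\Bigl(1+\tfrac{p^2}{(2i+1)^2}\Bigr),
\]
bound the second factor uniformly by $\exp(p^2\pi^2/8)$ via $1+x\le e^x$ and $\sum_{i\ge0}(2i+1)^{-2}=\pi^2/8$, use $\prod_{i=0}^{k-1}(2i+1)=(2k)!/(2^k k!)$, and then Stirling (equivalently $\binom{2k}{k}\le 4^k/\sqrt{\pi k}$) gives $\nu(2k+1)\le K' k^{-3/2}$ for all $k\ge1$. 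This is correct, and it has the advantage of being self-contained: it does not rest on the paper's unproved comparison $f(2n+1)<f(2n)$, nor on a finite verification up to $i^*$. The paper's route is shorter when the monotonicity is accepted; yours is more explicit and closes the gap. Both arrive at the same conclusion with a constant $C$ of the same structure.

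One small notational remark: you write $b_2(2k)=\tfrac12 b(k)$, consistent with the sampling algorithm ($N=2Y$ or $2Y+1$, each with probability $1/2$, with $Y\sim b$), whereas the lemma statement has $\tfrac12 b([n/2])$; the algorithm and the paper's own use of $C=2\widetilde{C}$ confirm your reading, so you are not in error there.
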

\begin{proof}
The NEF parameter satisfies
\[
\theta(m) = -\frac12\log(1+(p^2/m^2))\leq 0.
\] 
Let $A = f(0)+f(1)=\PP(N=0)+\PP(N=1)$.
Because $f(n)=\nu(n)e^{\theta(m)n-\kappa(m)}$, we can
bound the probabilities $f(2n|n\geq 1)=\PP(N=2n|N\geq 2)$
by
\[
f(2n|n\geq 1) = \frac{f(2n)}{1-A} 
\leq \frac{Ke^{-\kappa(m)}}{1-A}\,\frac{1}{n\sqrt{n}} \leq  
\widetilde{C}\,b(n),
\]
where 
\[
\widetilde{C} = \frac{Ke^{-\kappa(m)}}{1-A}\,\frac{\sqrt{2}}{\sqrt{2}-1}.
\]
Then $f(2n|n\geq 1) \leq C b_2(2n)$
for $C=2\widetilde{C}$.

\medskip\noindent
The constant $K$ depends on a treshold $i^*$ such that 
for $n>i^*$, $f(2n+1)<f(2n)$, 
and for $n\leq i^*$,  $\nu(2n+1) \leq K n^{-3/2}$ (see the Appendix).
Thus also all odd terms satisfy $f(2n+1|n\geq 1) \leq C b_2(2n+1)$.
\end{proof}

\medskip\noindent
\textbf{Remark.}
Similarly to our algorithm for sampling from the Abel distribution, 
also the constant $C$ becomes larger for larger $p$, deteriorating 
the accept-reject sampling method. In our implementation we
included one more term in the bounding procedure that is described
in the Appendix. This gave an acceptance ratio of $0.34$.

\medskip\noindent
Summarizing, the Monte Carlo algorithm for simulating from the arcsine
distribution \eqref{e:arcsinepmf} becomes:

\begin{center}
\begin{boxedminipage}{13cm}
\textsc{Simulation Algorithm for NEF Arcsine Distribution}
\begin{algorithmic}[1]
\State Generate $U\hasdist\Uni(0,1)$.
\If{$U<f(0)$}
\State $N\gets 0$.
\Else
\If{$U<f(0+f(1)$}
\State $N\gets 1$.
\Else
\Repeat
\State Generate $U\hasdist\Uni(0,1)$.
\State Set $Y=\lfloor U^{-2}\rfloor$.
\State Generate $U\hasdist\Uni(0,1)$.
\If{$U<0.5$}
\State Set $N=2Y$.
\Else
\State Set $N=2Y+1$.
\EndIf
\State Compute $P=\frac{f(N|N\geq 2)}{C\,b_2(N)}$.
\State Generate $U\hasdist\Uni(0,1)$.
\Until{$U < P$}
\EndIf
\EndIf
\State \textbf{return} $N$.
\end{algorithmic}
\end{boxedminipage}
\end{center}


\subsection{Takacs}
The variance function is given by
\begin{equation}\label{e:takcsvar}
V(m) = m\Big(1+\frac{m}{p}\Big)\Big(1+\frac{2m}{p}\Big),
\quad m,p>0.
\end{equation}

\noindent
By \eqref{e:thetakappa} we deduce that the NEF-parameter function
$\theta_A(m)$ and the log-moment generating function $\kappa_B(m)$ are 
derived using partial-fraction decomposition:

\begin{align*}
\theta_A(m)&=\int \frac{1}{V(m)}\,dm=
\int \Big(\frac{1}{m} + \frac{1/p}{1+m/p} -
\frac{4/p}{1+2m/p}\Big)\,dm = 
\int \Big(\frac{1}{m} + \frac{1}{p+m} -
\frac{4}{p+2m}\Big)\,dm \\
&= \log m + \log(p+m) - 2\log(p+2m) + A
=\log\frac{m(p+m)}{(p+2m)^2} + A.\\
\kappa_B(m)&=\int \frac{m}{V(m)}\,dm=
\int \Big(\frac{-1}{1+m/p} + \frac{2}{1+2m/p}\Big)\,dm
=\int \Big(\frac{-p}{p+m} + \frac{2p}{p+2m}\Big)\,dm\\
&=-p\log(p+m) + p\log(p+2m) +B= p\log\frac{p+2m}{p+m} +B.
\end{align*}

\noindent
The kernel is given in \citep{letac90}:

\begin{equation}\label{e:takacskernel}
\nu(n)=\frac{p}{n+p}\,\frac{1}{n!}
\,(n+p)(n+p+1)\cdots(n+p+n-1),\quad n\in\NN_0.
\end{equation}

\begin{proposition}
$A=0$  and $B=0$.
\end{proposition}

\begin{proof}
The constant $B$ follows from Proposition 4.4 of [Letac \& Mora, 1990]:

\[
\nu(0)=e^{\kappa(0)}\;\Leftrightarrow\;\;
1 = e^{B}\;\Leftrightarrow\;\; B=0.
\]

\noindent
Hence,

\begin{equation}\label{e:takacskappa}
\kappa(m) = p\log\frac{p+2m}{p+m}\;\;
\Rightarrow\;\;\frac{\kappa(m)}{p}= \log\frac{p+2m}{p+m}. 
\end{equation}

\noindent
The constant $A$ follows from detailed readings of [Letac \& Mora, 1990].
Specifically, Theorem 4.5. The Takacs distribution follows by defining
the generating function

\[
g(z) = (1-z)^{-1} = \sum_{n=0}^\infty z^n,
\] 
and 
\[
\nu(n) = \frac{p}{p+n}\,\frac{1}{n!}\,\Big(\frac{d}{dz}\Big)^n 
\, g^{n+p}(z)\Big|_{z=0}.
\]

\noindent
Indeed, you get \eqref{e:takacskernel}. Furthermore, display (4.27) in
[Letac \& Mora, 1990] says

\[
e^{\theta(m) + \kappa(m)/p} = g^{-1}\Big(e^{\kappa(m)/p}\Big).
\]

\noindent
Substituting $g^{-1}(y)=(y-1)/y=1-(1/y)$ 
and the expression for $\kappa(m)/p$
in display \eqref{e:takacskappa}, we get:

\[
y = e^{\kappa(m)/p} = \frac{p+2m}{p+m}
\;\;\Rightarrow\;\;
g^{-1}\Big(e^{\kappa(m)/p}\Big)=1 - \frac{p+m}{p+2m}
=\frac{m}{p+2m},
\]

\noindent
thus,
\begin{align*}
& e^{\theta(m) + \kappa(m)/p} = g^{-1}\Big(e^{\kappa(m)/p}\Big)\\
& \Leftrightarrow\;\;
\theta(m) + \log\frac{p+2m}{p+m} = \log\frac{m}{p+2m}\\
&\Leftrightarrow\;\; 
\theta(m) = \log\frac{m}{p+2m} - \log\frac{p+2m}{p+m}=
\log\frac{m(p+m)}{(p+2m)^2}.
\end{align*}

\noindent
Conclusion $A=0$.
\end{proof}

\noindent
Define 
\[
\nu_0(n) = \nu(n)e^{\theta_0(m) n}; \;\;
\theta(m) = 0; \;\;
\kappa(m) =\kappa_0(m)
\]

\noindent
The NEF Takacs counting probability mass function (pmf)
of the random variable $N$ is
\begin{equation}\label{e:takacspmf}
\PP(N=n)=f(n)=\nu_0(n)\, e^{-k(m)}.
\end{equation}

\noindent
In the appendix we show that there is a constant $K$
such that
\[
\nu_0(n)\leq
K\,\frac{1}{n\sqrt{n}},
\quad n=1,2,\ldots.
\]

\begin{lemma}\label{l:takacsdominating}
There is a constant $C$ (not dependent on $n$) such that
\[
f(n|n\geq 1)\leq Cb(n),\quad n=1,2,\ldots.
\]
\end{lemma}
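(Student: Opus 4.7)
The proof plan is to mimic exactly the structure of Lemma~\ref{l:abeldominating} (the Abel case), since the essential ingredients have already been assembled. The only genuinely nontrivial content—the bound $\nu_0(n)\le K/(n\sqrt{n})$—is deferred to the Appendix and can be used as a black box here.

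First I would write out the conditional pmf as
\[
f(n\mid n\geq 1)=\frac{f(n)}{1-f(0)}=\frac{\nu_0(n)\,e^{-\kappa(m)}}{1-f(0)},
\]
which isolates the $n$-dependence inside $\nu_0(n)$ since $m$ (hence $\kappa(m)$ and $f(0)$) is fixed. Note that in the Takacs parametrization we have already set $\theta_0(m)=0$ by absorbing the exponential factor into $\nu_0$, so no sign control on $\theta(m)$ is needed (unlike in the Abel argument).

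Next I would apply the Appendix bound $\nu_0(n)\le K/(n\sqrt{n})$ to obtain
\[
f(n\mid n\geq 1)\le \frac{K\,e^{-\kappa(m)}}{1-f(0)}\cdot\frac{1}{n\sqrt{n}}
=\frac{K\,e^{-\kappa(m)}\,\zeta(3/2)}{1-f(0)}\;z(n),
\]
identifying the Zipf pmf $z(n)=1/(\zeta(3/2)n\sqrt{n})$. Then I invoke the Devroye bound $z(n)\le c\,b(n)$ with $c=\sqrt{2}/(\zeta(3/2)(\sqrt{2}-1))$ to collapse the two $\zeta(3/2)$ factors and produce
\[
f(n\mid n\geq 1)\le \underbrace{\frac{K\,e^{-\kappa(m)}\sqrt{2}}{(1-f(0))(\sqrt{2}-1)}}_{=:C}\;b(n),
\]
with $e^{-\kappa(m)}=\bigl((p+m)/(p+2m)\bigr)^{p}$ from \eqref{e:takacskappa}. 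Since $C$ depends only on $m,p,K$, it is independent of $n$, which is what is claimed.

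The main obstacle is not in this lemma at all but in the deferred Appendix estimate $\nu_0(n)\le K n^{-3/2}$. That bound requires Stirling applied to the rising-factorial form of $\nu(n)$ in \eqref{e:takacskernel}, combined with the exponential tilt $e^{\theta(m)n}=\bigl(m(p+m)/(p+2m)^2\bigr)^n$; one has to check that the polynomial growth of the rising factorial $(n+p)(n+p+1)\cdots(2n+p-1)$ is exactly tamed by this tilt so that only a $n^{-3/2}$ tail survives, uniformly in $n$. Once that estimate is in hand, the present lemma is a routine two-line consequence, completely parallel to the Abel case.
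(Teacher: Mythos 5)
Your proposal is correct and follows essentially the same route as the paper: conditional pmf rewritten via $f(n)/(1-f(0))$, the Appendix bound $\nu_0(n)\le K n^{-3/2}$ applied as a black box, identification with the Zipf pmf $z(n)$, and the Devroye dominating factor to reach $Cb(n)$ with the identical constant $C=K e^{-\kappa(m)}\sqrt{2}/((1-f(0))(\sqrt{2}-1))$. Your observation that the tilt is absorbed into $\nu_0$ so no sign argument on $\theta(m)$ is needed is also accurate and matches the paper's structure.
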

\begin{proof}
It follows immediately,
\begin{align*}
f&(n|n\geq 1) = \frac{f(n)}{1-f(0)}
=\nu_0(n)\frac{e^{-\kappa(m)}}{1-f(0)}
\leq \frac{Ke^{-\kappa(m)}}{1-f(0)}\frac{1}{n\sqrt{n}}\\
&= \frac{K\,e^{-\kappa(m)}\zeta(3/2)}{1-f(0)}\,z(n)
\leq  \frac{K\,e^{-\kappa(m)}\sqrt{2}}{(1-f(0))(\sqrt{2}-1)}\,b(n).
\end{align*}
\end{proof}

\noindent
The associated Monte Carlo algorithm for generating
Takacs samples is similar as the Abel algorithm.
The acceptance ratio in our case study is $0.23$.

\subsection{Concluding Remarks}
These three counting distributions have tails that are much fatter
than the more often used Poisson and negative binomial 
distributions. As an example, we consider in Section \ref{s:case}
a case study where the data show a mean $m\approx 70$, and
the variance $V\approx 52000$, which indicates a power function
$V(m)\approx m^r$ with $r\approx 2.5$. This was one of the reasons
to consider our specific counting distributions. 

An important feature of these distributions is their large tails.
Figure \ref{f:tails} shows the probability mass functions
for $1000\leq n\leq 1200$. The Poisson probabilities in this
region are virtually zero. The Abel and Takacs distributions
behave in the tails equivalently, while the arcsine shows
slightly lighter tails.

\begin{figure}[H]
\centering
\includegraphics[width=0.6\textwidth]{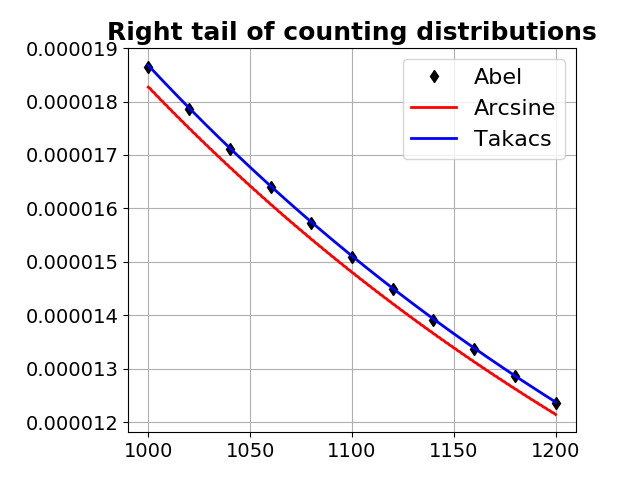}

\caption{Part of the probability mass functions.}%
\label{f:tails}%
\end{figure}


\section{NEF Claim Distributions}\label{s:claim}
For modeling the individual claim $Y$,
we consider positive reproducible NEF densities represented by
\[
f(y;\theta,p) = f(y)\,e^{\theta y-\kappa(\theta)},\quad y>0.
\]

\begin{itemize}
\item
Gamma given by kernel
\[
\nu(y) = \frac{y^{p-1}\, e^{-y}}{\Gamma(p)},\quad y>0,
\]
with dispersion parameter $p>0$.
The VF $V(m) = \frac{m^2}{p}$
yields by  Section \ref{s:nef}
\[
\theta(m) = 1-\frac{p}{m};\quad
\kappa(m) = p\,\log\frac{m}{p}.
\]
By inversion we get
\[
m(\theta)=\frac{p}{1-\theta}; \quad
\kappa(\theta)=p\,\log\frac{1}{1-\theta},
\]
for $\theta<1$. Hence, 
\[
f(y;\theta,p)
=\frac{(1-\theta)^p\,y^{p-1}\, e^{-(1-\theta)y}}{\Gamma(p)}.
\]
We observe that we  actually deal with a Gamma distribution with
shape parameter $p$ and scale parameter $1-\theta$,
and thus generating samples can be easily done \citep{devroye86}.

\medskip

Finally, 
let $Y_1,\ldots,Y_n\hasdist f(y;\theta,p)$ i.i.d., and
$S_n=\sum_{i=1}^nY_i$. Then 
$S_n\hasdist f(y;\theta,np)$.
This is not the same as reproducibility, but the NEF shows
the same property that the convolution can be represented
by a single distribution.

\item
Inverse Gaussian given by kernel
\[
\nu(y) = \frac{1}{\sqrt{2\pi p y^3}}\,
e^{\scalebox{1.0}{$- \frac{1}{2py}$}},\quad y>0,
\]
with dispersion parameter $p>0$.
The VF $V(m) = pm^3$
yields by  Section \ref{s:nef}
\[
\theta(m) = -\frac{1}{2pm^2};\quad
\kappa(m) = -\frac{1}{pm}.
\]
By inversion we get
\[
m(\theta) = \frac{1}{\sqrt{-2p\theta}};\quad
\kappa(\theta) = -\sqrt{\frac{-2\theta}{p}}, 
\]
for $\theta<0$. Hence, 
\begin{align*}
& f(y;\theta,p)
=\sqrt{\frac{1}{2\pi p y^3 }}\,
e^{\scalebox{1.0}{$- \frac{1}{2py}  + \theta y + \sqrt{\frac{-2\theta}{p}}$}}\\
& = \frac{1}{\sqrt{2\pi p y^3}}\,
e^{\scalebox{1.0}{$\frac{\theta}{y}\big(y+\sqrt{\frac{1}{-2p\theta}}\big)^2$}}.
\end{align*}
Setting 
\[
\delta = \frac{1}{\sqrt{p}};\quad
\gamma =\sqrt{-2\theta},
\]
we recognize the more traditional form of the Inverse Gaussian
pdf for which a simulation algorithm has been developed
\citep{michael76,shuster68}.

\medskip

Finally, 
let $Y_1,\ldots,Y_n\hasdist f(y;\theta,p)$ i.i.d., and
$S_n=\sum_{i=1}^nY_i$. Then 
\[
S_n\hasdist c_n f(c_n y; g_n(\theta),p),
\]
where 
\[
c_n = \frac{1}{n^2};\; g_n(\theta)=n^2\theta.
\]
See \citep{barlev86} for details.
Substituting these, we get after algebra,
\begin{equation}\label{e:igpdfsum}
S_n\hasdist f(y; \theta, p/n^2).
\end{equation}
\item
Positive $\alpha$-stable distribution.
Recall that a random variable $Y$
has an stable distribution with index $\alpha$,
denoted  
$Y\hasdist\Stb_\alpha(\sigma,\beta,\mu)$, if its
characteristic function satisfies (for convenience $\alpha\neq 1$):
\[
\log \phi(t) =  -\sigma^\alpha |t|^\alpha\,
\Big(1 - i\beta\sign(t)\,\tan\frac{\pi\alpha}{2}\Big) + i\mu t,
\]
for $t\in\RR$, where the parameters satisfy
\[
\alpha\in(0,2];\;\beta\in[-1,1];\; \mu\in\RR;\; \sigma>0,
\]
see e.g. \citet{nolan10,samorod94}. 
Since we consider positive variables $Y$, 
we get the so-called positive $\alpha$-stable distribution
by setting
$\alpha\in(0,1)$, $\beta=1$, $\mu\geq 0$. Furthermore we set
location parameter $\mu=0$ in which case
\[
\sigma = \Big(\cos \frac{\pi\alpha}{2}\Big)^{1/\alpha},
\]
and the cumulant generating function becomes \citep{feller71}
\[
\kappa(\theta) = -(-\theta)^\alpha,\quad\theta\leq 0.
\]
Both moments of the NEF-distributions $F(Y;\theta,p)$ are finite, whereas 
these are infinite for the kernel distribution
$F(y)$ which is positive $\alpha$-stable.

\medskip\noindent
Note that with this modeling the pdf $f(y),\,y>0$ is only
parameterized by index $\alpha$, but it is not given 
in explicit form. 
However, it generates a NEF with a power VF
\citep{barlev86,jorgensen87,tweedie84}
\[
V(m) = a m^p,
\]
where
\[
p = \frac{2-\alpha}{1-\alpha}>2;\; 
a=(1-\alpha)\alpha^{1/(\alpha-1)}>0.
\]
Also we obtain the $\theta$ and $\kappa$ function of mean $m$
and index $\alpha$:
\[
\theta(m) = -\Big(\frac{m}{\alpha}\Big)^{1/(\alpha-1)};\;\
\kappa(m) = -\Big(\frac{m}{\alpha}\Big)^{\alpha/(\alpha-1)}
\]
Thus, given mean $m$ and variance $V(m)$ we compute the
parameters $\theta$ and $\alpha$  for the NEF distribution
with pdf
\[
f(y;\theta,\alpha)=f(y)\,e^{\theta y-\kappa(\theta)},\;\;y>0.
\]
Generating samples from the NEF distribution is done by
acception-rejection algorithm, using $f(y)$ as proposal
pdf and $C=e^{-\kappa(\theta)}=e^{(-\theta)^\alpha}\geq 1$
as dominating factor. 
This follows directly from $\theta y\leq 0$. Furthermore,
generating from the proposal pdf $f(x)$ is based on
(i) generating from $\Stb_\alpha(1,1,0)$ distribution
by the Chambers algorithm \citep{chambers76}, and
(ii) the property $\Stb_\alpha(\sigma,1,0)\isindist 
\sigma \Stb_\alpha(1,1,0)$. 

\medskip

\begin{center}
\begin{boxedminipage}{13cm}
\textsc{Generating from $\theta$-NEF Positive $\alpha$-Stable Distribution}
\begin{algorithmic}[1]
\Repeat
\State Generate $X$ from $\Stb_\alpha(1,1,0)$.
\State $Y\gets \sigma X$.
\State Compute acceptance probability $P=e^{\theta Y}$.
\State Generate $U$ from uniform $(0,1)$ distribution.
\Until{$U < P$}
\State \textbf{return} $Y$.
\end{algorithmic}
\end{boxedminipage}
\end{center}

\noindent
Finally, positive $\alpha$-distributions satisfy
the reproducibility property \citep{barlev86}:
let $Y_1,\ldots,Y_n \hasdist f(y;\theta,\alpha)$ i.i.d., and
$S_n=\sum_{i=1}^nY_i$. Then 
\[
S_n\hasdist c_n f(c_n y; g_n(\theta),\alpha),
\]
where 
\[
c_n = n^{-1/\alpha};\;
g_n(\theta) = \frac{\theta}{c_n}=\theta\,n^{1/\alpha}.
\]
\end{itemize}


\section{Computing Insurance Risk}\label{s:computing}
The goal of our study is to compute efficiently the tail
probability $\ell=\PP(S_N>x)$ for large thresholds $x$, where
$S_N=\sum_{j=1}^NX_j$ is the random sum. We consider
Monte Carlo simulation while applying two
ideas: (i) the reproducibility, and (ii) importance sampling.
The reproducibility ensures that given $N=n$ has been generated or
observed, we generate $S$ as a single random variable
in stead of a sum (convolution). 

The standard Monte Carlo algorithm is trivial.
Let $M$ be the sample size, then the Monte Carlo estimator is
\[
\widehat{\ell}=\frac1M\sum_{i=1}^M\indicator{S^{(i)}_{N^{(i)}}>x},
\]
where in the $i$-th replication, the counting number
$N^{(i)}$ is generated from the counting distribution of interest
(Abel, arcsine, or Takacs), according to the algorithms of Section 
\ref{s:counting}. Given $N^{(i)}=n$, the aggregated claim size
$S^{(i)}_n$ is generated from the claim distribution
of interest (Gamma, inverse Gaussian, or positive $\alpha$-stable)
using the reproducibility property of Section \ref{s:nef}.
From the observations $\indicator{S^{(i)}_{N^{(i)}}>x}$, $i=1,\ldots,M$,
we compute the usual estimator and standard error (or confidence interval)
statistics.

However, if the threshold 
$x\gg \EE[S_N]=\EE[N]\EE[Y]$, we have difficulties in observing
the event $\{S_N>x\}$ when we apply the standard Monte Carlo
algorithm. As an illustration,
let $N$ be Abel and $Y$ be inverse Gaussian,
where the parameters are fitted by data in our case study 
of Section \ref{s:case}. The mean aggregate claim size 
$\EE[S_N]\approx 330$. Because our distributions have large tails, we consider
large levels $x$. As sample size we choose $M$ so large that the
standard error is about 10\% of the estimate. We see in
Table \ref{t:mcabelig} that the 
required sample sizes grow exponentially with level $x$
which means that very small probabilities are practically impossible
to compute.

\medskip

\begin{table}[H]
\caption{Estimates of $\PP(S_N>x)$ by Monte Carlo simulation
for Abel counting and inverse Gaussian claim
distributions.}
\label{t:mcabelig}
\medskip
\centering
\begin{tabular}{r|r|cc}
$x$ & $M$ & $\widehat{\ell}$ & std. error\\
\hline
5000 & 9000 & 1.08e-02 & 1.09e-03 \\
10000 & 37000 & 2.59e-03 & 2.64e-04 \\
15000 & 150000 & 6.47e-04 & 6.56e-05 \\
20000 & 410000 & 2.37e-04 & 2.40e-05 \\
25000 & 1020000 & 9.51e-05 & 9.66e-06
\end{tabular}
\end{table}

\subsection{Importance Sampling Algorithm} 
The idea of importance sampling is to
change the underlying probability measure of the stochastic system
in such a way that more samples are generated
from the target event. An unbiased estimator is
obtained by multiplying the observations with the likelihood ratio.
Denote the random variables that are generated 
in importance sampling by $\widetilde{N}$ and
$\widetilde{S}$, respectively. 
Suppose that $\widetilde{N}=n$, and
$\widetilde{S}=s$ are simulated, then the 
associated likelihood ratio is
\[
W(n,s)=\frac{\PP(N=n)}{\PP(\widetilde{N}=n)}\times
\frac{f_S(s)}{f_{\widetilde{S}}(s)}.
\]
The importance sampling estimator becomes
\[
\widehat{\ell}=\frac1M\sum_{i=1}^M
\indicator{\widetilde{S}^{(i)}_{\widetilde{N}^{(i)}}>x}
W(\widetilde{N}^{(i)},\widetilde{S}^{(i)}).
\]

\noindent
We have implemented the following importance sampling algorithm.
Let the parameters of the counting distribution be
$(\theta_N,p_N,m_N)$ (see Section \ref{s:counting}), 
and of the claim distribution
$(\theta_Y,p_Y,m_Y)$  (see Section \ref{s:claim}). 
These parameters are fitted to the data,
but note that the NEF-parameter $\theta$ follows from the 
mean-parameter $m$, and vice-versa, thus one of these suffices.
For the change of measure we propose changing the NEF-parameter 
(and consequently the mean-parameter), but not the dispersion parameter $p$.
In fact, we apply an exponential change of measure
using a common tilting parameter, say $\theta^*$,
for both the counting and the claim-size distribution.
This parameter is obtained as follows. Let
$\kappa(\theta) = \log \EE\big[\exp(\theta S_N)\big]$
be the cumulant generating function of the aggregated sum.
Then $\theta^*$ solves the saddlepoint equation $\kappa'(\theta)=x$;
thus
\[
=m_N(\theta_N+\theta^*) \times m_Y(\theta_Y+\theta^*)=x.
\]
The interpretation is that 
under the change of measure the most
likely samples of $S_N$ are generated around our target level $x$.
It is well-known in the rare-event theory that 
such a change of measure yields a logarithmically efficient
(or, asymptotically optimal) estimator in case of a fixed
number of light-tailed claims, i.e. $\PP(S_n>x)$, see
\citet[Chapter VI Section 2]{asmussenglynn07} or 
\citet[Chapter V Section 2]{bucklew04}.
However, by a conditioning argument one can show that the same
holds true for a random sum. This means that the required
sample sizes grow polynomially in level $x$,
which we can clearly see in Table \ref{t:isabelig}.
Our algorithm contains a minor tweak in that after
$\widetilde{N}$ has been generated, say $\widetilde{N}=n$,
we check whether $\EE[S_n]>x$. In that case, we generate $S_n$
from the original claim distribution, and otherwise we apply the change
of measure also for the claims.

\medskip

\begin{table}[H]
\caption{Estimates of $\PP(S_N>x)$ by importance sampling simulation
for Abel counting and inverse Gaussian claim
distributions.}
\label{t:isabelig}
\medskip
\centering
\begin{tabular}{r|r|cc}
$x$ & $M$ & $\widehat{\ell}$ & std. error \\
\hline
5000 &  4000 & 1.01e-02 & 9.09e-04\\
10000 &  6000 & 2.46e-03 & 2.43e-04\\
15000 & 10000 & 7.18e-04 & 6.88e-05\\
20000 & 14000 & 2.22e-04 & 2.23e-05\\
25000 & 16000 & 8.48e-05 & 8.40e-06\\
30000 & 20000 & 3.59e-05 & 3.65e-06\\
35000 &  26000 & 1.29e-05 & 1.25e-06\\
40000 &  34000 & 4.42e-06 & 4.41e-07 \\
45000 &  34000 & 2.18e-06 & 2.16e-07 \\
50000 &  40000 & 7.68e-07 & 7.78e-08
\end{tabular}
\end{table}


\section{Case Study}\label{s:case}
Data are available of claims at a car insurance company in Sweden
in a specific year\citep{hallin83,smyth11}.
The data consist of 2182 categories of 7 variables 
specifying per category:
kilometres, zone, bonus, make, insured, claims, payment.
Let $I$ be the set of categories, with $|I|$=2182. 
For any $i\in I$, we model the random variables
\begin{itemize}
\item
$N_i$: the number of claims in category $i$;
\item
$Y_i$: the claim size of a claimer in category $i$;
\item
$S_i$: the total amount of claims in category $i$.
\end{itemize}
The data give the numbers
$n_i$ of claimers, and $s_i$ of total claim amount,
they do not give the individual claim sizes.

\bigskip\noindent
For some subcatogories $J\subset I$ we propose that
the $N_j, \,j\in J$ are i.i.d. as $N$, and that the $Y_j, \, j\in J$
are i.i.d. as $Y$. Also we propose that $N$ and $Y$ are independent.
Data available are $n_j,\, j\in J$ observations
from $N$, and $s_j = \sum_{k=1}^{n_j}y_{jk}$ observations
from $S=\sum_{k=1}^N Y_k$ given $N$.

\bigskip\noindent
Let $\btheta_N$ be the vector of parameters of the
probability distribution of the counting variable $N$,
and $\btheta_Y$ of the claim size distribution
of $Y$. Due to the reproducibility property of $Y$,
the distribution of sum $S|(N=n)$ has the same parameter
vector $\btheta_Y$ (and the given number $n$).
For estimating these parameters we considered the two-moment
fit method because all our distributions are derived from
the mean and variance.

That is, 
let $\widehat{m}_{N}$ and  
$\widehat{v}_{N}$ be the sample average and variance
of the counting data $(n_j)_{j\in J}$. Then we fit a distribution
for $N$ such that
\[
\EE[N] = \widehat{m}_{N}\;\;\text{and}\;\;
\Var(N) = \widehat{v}_{N}.
\]
For the counting distributions of Section \ref{s:counting} we get

\begin{align*}
\mathrm{Abel}: & \quad
p = \frac{\widehat{m}_{N}\sqrt{\widehat{m}_{N}}}%
{\sqrt{\widehat{v}_{N}} - \sqrt{\widehat{m}_{N}}}\\
\mathrm{Arcsine}: & \quad
p = \frac{\widehat{m}_{N}\sqrt{\widehat{m}_{N}}}%
{\sqrt{\widehat{v}_{N} - \widehat{m}_{N}}}\\
\mathrm{Takacs}: & \quad
p = \frac{4\widehat{m}_{N}\sqrt{\widehat{m}_{N}}}%
{\sqrt{8\widehat{v}_{N} + \widehat{m}_{N}} - 3\sqrt{\widehat{m}_{N}} }
\end{align*}

\noindent
Similarly, 
let $\widehat{m}_{Y}$ and  
$\widehat{v}_{Y}$ be the sample average and variance
of the claim data $(y_{jk})_{j\in J,k=1,\ldots,n_j}$. 
Then we fit a distribution
for $Y$ such that
\[
\EE[Y] = \widehat{m}_{Y}\;\;\text{and}\;\;
\Var(Y) = \widehat{v}_{Y}.
\]
Note that the individual claim data $(y_{jk})$ are not
observed, but that their sample average can be computed:
\[
\widehat{m}_{Y}=\frac{\sum_j s_j}{\sum_j n_j}.
\]
And for the sample variance of the individual claims we 
use the well-known identity for
the variance of the aggregated sum $S=\sum_{k=1}^N Y_k$:
\[
\Var(S) = (\EE[N])(\Var(Y)) + (\Var(N))(\EE[Y])^2.
\]

\subsection{Subcategories Larger Cities}
630 data have insured customers from major cities. 

\begin{center}
\begin{tabular}{l|rr}
data  & average & variance \\
\hline
claim number & 70.60 & 52181.52\\
aggregate claim size & 329.22 & 1153532.32\\
individual claim size & 4.66 & 265.34
\end{tabular}
\end{center}

\noindent
These gave dispersion parameter $p$ of the counting distributions:

\begin{center}
\begin{tabular}{l|c}
& $p$\\
\hline
Abel &    2.695844\\
Arcsine & 2.598444 \\
Takacs &  3.821015 
\end{tabular}
\end{center}

\noindent
The parameters of the claim distributions were obtained as explained
above:

\begin{center}
\begin{tabular}{l|ccc}
& $\theta$ & $p$ & $\alpha$\\
\hline
Gamma  & 0.982425  & 0.081960 &\\
IG & -0.008788 & 2.616360 &\\
Stable & -0.015496 & 2.134192 & 0.118315
\end{tabular}
\end{center}

\noindent
With these parameters we have fitted the counting distribution 
and the claim distribution. Then we ran simulations
of aggregated claim sizes in these models and executed
the chi-square test for 
goodness-of-fit (hypothesising that the samples came from the same
distribution).
As an example, below we show the histograms of 
the data $S_N$ and the simulated $S_N$ in case of the
Poisson-Gamma, Abel-IG and Arcsine-Stable combinations.

\begin{figure}[H]
\centering
\includegraphics[width=0.6\textwidth]{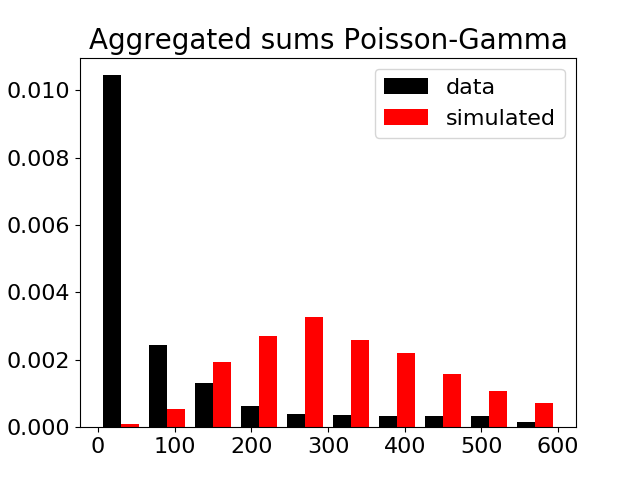}

\includegraphics[width=0.6\textwidth]{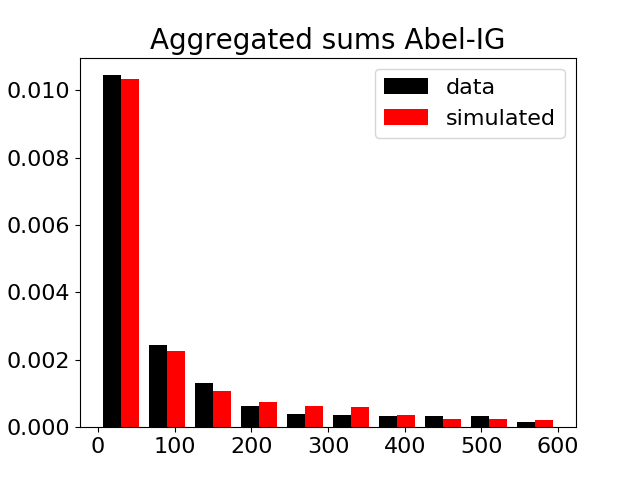}

\includegraphics[width=0.6\textwidth]{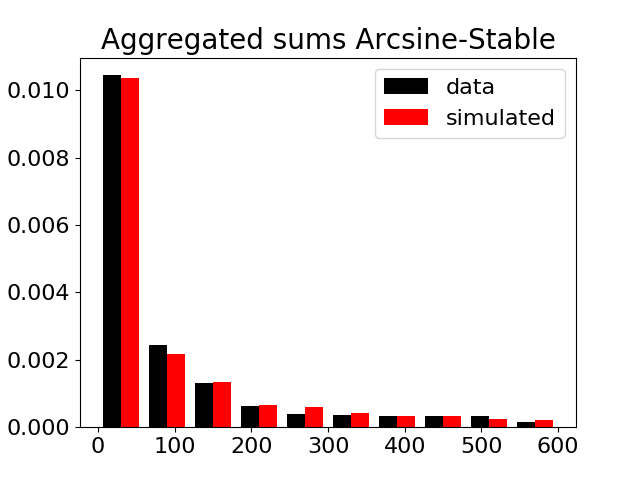}
\caption{
Histograms of the data and $2000$
simulated samples (normed to form pdf's).}%
\label{f:sumhist}%
\end{figure}

\noindent
Table \ref{t:pvalues} summarizes the test results in terms of $p$-values.

\begin{table}[H]
\caption{$p$-values of the fitted models.}
\label{t:pvalues}
\medskip
\centering
\begin{tabular}{ll|c}
count & claim & $p$-value\\
\hline
Poisson & Gamma & $\approx 0$\\ 
Poisson & IG & $\approx 0$\\ 
Poisson & Stable & $\approx 0$\\ 
\hline
Abel &  Gamma & $0.2101$\\
Abel &  IG & $0.2459$\\
Abel &  Stable & $0.3089$\\
\hline
Arcsine &  Gamma & $0.1306$\\
Arcsine &  IG & $0.4224$\\
Arcsine &  Stable & $0.7460$\\
\hline
Takacs &  Gamma & $0.4159$\\
Takacs &  IG & $0.2800$\\
Takacs &  Stable & $0.2701$\\
\end{tabular}
\end{table}

\noindent
We may conclude that the arcsine counting variable with
positive stable claim size gives the best fit.
The computations of the risk probabilities in this model
are easily implemented by the algorithms that we exposed
in Section \ref{s:counting} for the arcsine samples,
in Section \ref{s:claim} for the positive stable samples,
and  in Section \ref{s:computing} for the 
Monte Carlo and importance sampling simulations.
Table \ref{t:arcposstable} shows results for both
standard Monte Carlo and importance sampling simulations.
Again we see the exponential versus polynomial increase
of the required sample sizes.
For levels $x\leq 25000$ the estimates fall
in their corresponding confidence intervals
(in most caes), while for large levels, $x>25000$,
we have no Monte Carlo results.
 
\medskip

\begin{table}[H]
\caption{Estimates of $\PP(S_N>x)$ by Monte Carlo and imporance
sampling simulation
for arcsine counting and positive $\alpha$-stable claim
distributions.}
\label{t:arcposstable}
\medskip
\centering
\begin{tabular}{r||r|cc||r|cc}
& \multicolumn{3}{c||}{Monte Carlo}
& \multicolumn{3}{c||}{importance sampling}\\
$x$ & $M$ & $\widehat{\ell}$ & std. error & $M$ & $\widehat{\ell}$ & std. error\\
\hline
5000 & 10000 & 1.02e-02 & 1.00e-03 & 4000 & 9.89e-03 & 9.39e-04\\
10000 & 46000 & 2.11e-03  & 2.14e-04 & 7000 & 2.23e-03 & 2.23e-04\\
15000 & 128000 & 7.64e-04 & 7.73e-05  & 9000 & 8.12e-04 & 8.20e-05\\
20000 & 394000 & 2.49e-04 & 2.51e-05 & 14000 & 2.21e-04 & 2.25e-05\\
25000 & 1360000 & 7.13e-05 & 7.24e-06 & 16000 & 8.37e-05 & 8.52e-06\\
30000 & & & & 20000 & 4.18e-05 & 4.13e-06\\ 
35000 & & & & 24000 & 1.42e-05 & 1.42e-06 \\
40000 & & & & 30000 & 5.10e-06 & 5.15e-07\\
45000 & & & & 36000 & 2.31e-06 & 2.35e-07\\
50000 & & & & 38000 & 1.08e-06 & 1.08e-07
\end{tabular}
\end{table}

\section{Conclusion}
We analysed insurance claim data and modeled the accumulated 
claim during a certain a period as a random sum of positive
random variables representing the individual claims. 
The data showed that both the random sum and the random claim size
have variances as large as cubic powers of their means.
For fitting distributions with cubic variance functions to the
insurance data we used the NEF modeling. In this way we
considered three discrete counting variables 
for fitting the random sum, and three positive continuous distributions
for fitting the claim size, all coming from NEF's.
We gave a thorough analysis of the nontrivial 
discrete counting variables for the 
purpose of developing sampling algorithms.
These sampling algorithms are all accept-reject based, where
the dominating proposal distribution is a Zipf distribution.
Our claim size distributions are commonly known and 
sampling algorithms can be found in the literature.

Being able to sample from the aggregate claim distribution, 
we execute Monte Carlo simulations for computing tail
probabilities, specifically for large losses. The
efficiency of these simulations
was improved by two techniques. The first being that
the claim size distributions satisfy the reproducibility
property implying that convolutions come from the same family
as the individual distribution. The second improvement
is the application of importance sampling. Our numerical experiments
show that the exponential complexity of standard Monte Carlo is reduced
to polynomial complexity.

The downside of our method is that the accept-reject
algorithms of the counting distributions have acceptance
ratio of 25\%-35\%. Therefore we shall investigate in future
work the application of other sampling algorithms,
notably MCMC and multilevel Monte Carlo methods.

\bigskip\noindent
\textbf{Acknowledgement:}
This research is partially supported by the Netherlands Organisation
for Scientific Reserarch (NWO) project number 040.11.608.
The part of work of Ad Ridder is partially supported by the 
Zimmerman foundation while he was visiting the University of Haifa in 
January 2017.

\bibliographystyle{chicago}
\bibliography{references.bib}

\section*{Appendix}
\subsection*{Arcsine Bound}
Recall the kernel on the even outcomes

\[
\nu(2n)=
\frac{1}{(2n)!}\prod_{i=0}^{n-1}\big((2i)^2+p^2\big)
=\prod_{i=0}^{n-1}\frac{(2i)^2+p^2}{(2i+1)(2i+2)}
=\prod_{i=0}^{n-1}\underbrace{\frac{4i^2+p^2}{4i^2+6i+2}}_{=\rho_i}.
\]

\noindent
We shall bound the $\rho_i$'s for sufficiently large $i$. 
The threshold lies at $i^*$ that is such that both
$i^*+1i^*\geq 6$, and for all $i\geq i^*+1$,

\[
\frac{3}{2i} - \Big(\frac{9+p^2}{4i^2} - \frac{3p^2}{8i^3} + 
\frac{9p^2}{4i^4}\Big)>0. 
\] 

\noindent
First, dividing by $4i^2$ we easily get (for all $i\geq 1$),

\begin{align*}
&\rho_i=\frac{1 + p^2/(4i^2)}{1 + 6/(4i) + 2/(4i^2)}
\leq \frac{1 + p^2/(4i^2)}{1 + 6/(4i)}\\
&\leq \big(1 + \frac{p^2}{4i^2}\big)
\big(1 - \frac{6}{4i} + \frac{36}{(16i^2}\big)\\
&= 1 - \frac{3}{2i} + \Big(\underbrace{\frac{9+p^2}{4i^2} - \frac{3p^2}{8i^3} + 
\frac{9p^2}{4i^4}}_{=\epsilon_i}\Big).
\end{align*}

\noindent
Thus for $i\geq i^*+1$ we have that $\frac{3}{2i}-\epsilon_i>0$, and 
therefore we can bound

\[
\log \rho_i \leq \log\Big(1 - \big(\frac{3}{2i}-\epsilon_i\big)\Big)
\leq - \big(\frac{3}{2i}-\epsilon_i\big).
\] 

\noindent
Define $G=\prod_{i=0}^{i^*}\rho_i$. So we get for $n\geq i^*+1$,

\begin{align*}
\log &\,\nu(2n)=\log G + \sum_{i=i^*+1}^{n-1}\log \rho_i \\
&\leq \log G - \sum_{i=i^*+1}^{n-1} \big(\frac{3}{2i}-\epsilon_i\big)
= \log G - \frac32\sum_{i=i^*+1}^{n-1}\frac{1}{i}+
\sum_{i=i^*+1}^{n-1}\epsilon_i.
\end{align*}

\noindent
The second term is easy:

\[
\sum_{i=i^*+1}^{n-1}\frac{1}{i}
=\sum_{i=1}^{n-1}\frac{1}{i}-\sum_{i=1}^{i^*}\frac{1}{i}
\geq \log n -\sum_{i=1}^{i^*}\frac{1}{i}.
\]

\noindent
Concerning the $\epsilon_i$'s. Clearly positive, and for $i\geq 6$,

\begin{align*}
& \epsilon_i = \frac{9+p^2}{4i^2} - \frac{3p^2}{8i^3} + \frac{9p^2}{4i^4}\\
&=\frac{9}{4i^2} + \frac{p^2}{8i^2}
\Big(\underbrace{2- \frac{3}{i} + \frac{18}{i^2}}_{\leq 2}\Big) 
\leq \frac{9+p^2}{4i^2}.
\end{align*}

\noindent
Hence, for $n\geq i^*+1$,

\begin{align*}
&\sum_{i=i^*+1}^{n-1}\epsilon_i
\leq \frac{9+p^2}{4}\sum_{i=i^*+1}^{n-1} \frac{1}{i^2}
=\frac{9+p^2}{4}\Big(\sum_{i=1}^{n-1} \frac{1}{i^2}
-\sum_{i=1}^{i^*} \frac{1}{i^2}\Big) \\
&\leq \frac{9+p^2}{4}\Big(\sum_{i=1}^{\infty} \frac{1}{i^2}
-\sum_{i=1}^{i^*} \frac{1}{i^2}\Big) 
=\frac{9+p^2}{4}\Big(\zeta(2)-\sum_{i=1}^{i^*} \frac{1}{i^2}\Big),
\end{align*}

\noindent
where $\zeta(\cdot)$ is the Riemann-zeta function.
Wrapping up we get by exponentiating,
$\nu(2n) \leq K_1 n^{-3/2}$ for $n\geq i^*+1$,
where

\[
K_1 = G \,\exp\Big(\frac32\sum_{i=1}^{i^*}\frac{1}{i}
+ \frac{9+p^2}{4}\big(\zeta(2)-\sum_{i=1}^{i^*}\frac{1}{i^2}\big)\Big).
\]

\noindent
Find $K_0$ such that both $\nu(2n) \leq K_0 n^{-3/2}$ 
for $n=1,\ldots,i^*$. We demand this inequality also for 
the odd terms; i.e., $\nu(2n+1) \leq K_0 n^{-3/2}$,
$n=1,\ldots,i^*$.
Then by setting $K=\max\{K_0,K_1\}$,
\[
\nu(2n) \leq K\frac{1}{n\sqrt{n}},\quad n\geq 1.
\]

\noindent
Again we recognize the Zipf distribution, which will be usefull
for an accept-reject sampling algorithm.

\subsection*{Takacs Bound}
Recall the adapted kernel
\begin{align*}
& \nu_0(n)=\nu(n) e^{\theta_0(m)n}
=\frac{p}{n+p}\,\frac{1}{n!}\,(n+p)(n+p+1)\cdots(n+p+n-1)\,e^{\theta_0(m)n}\\
&=\frac{p}{n+p}\,\frac{(2n+p-1)!}{n!(n+p-1)!}\,e^{\theta_0(m)n},
\quad n=9,1,\ldots.
\end{align*}

\noindent
The Stirling bounds of $n!$ are
\[
\sqrt{2\pi n}\,\big(n e^{-1}\big)^n < n! 
< \sqrt{2\pi n}\,\big(n e^{-1}\big)^n\, e^{1/12},\quad n=1,2,\ldots.
\]

\noindent
Applying these bounds, we get

\[
\nu_0(n)\leq 
\frac{p}{n+p}\,\frac{e^{1/12}}{\sqrt{2\pi}}\,
\sqrt{\frac{2n+p-1}{n(n+p-1)}}\,
\Big(\frac{2n+p-1}{n+p-1}\Big)^{p-1}\,
\Big(\frac{(2n+p-1)^2}{n(n+p-1)}\,e^{\theta_0(m)}\Big)^{n}
\]

\noindent
The factors of this expression are worked out below.

\[
\sqrt{\frac{2n+p-1}{n(n+p-1)}} = \frac{1}{\sqrt{n}}\,
\sqrt{\frac{2+\frac{p-1}{n}}{1+\frac{p-1}{n}}}
\leq \frac{1}{\sqrt{n}}\,\sqrt{2},
\]
because in our models $p>1$. Thus also,
\[
\Big(\frac{2n+p-1}{n+p-1}\Big)^{p-1}=
\Big(\frac{2+\frac{p-1}{n}}{1+\frac{p-1}{n}}\Big)^{p-1}
\leq 2^{p-1}.
\]

\noindent
Finally,
\begin{align*}
&\frac{(2n+p-1)^2}{n(n+p-1)}\,e^{\theta_0(m)}
=\frac{(2n+p-1)^2}{n(n+p-1)}\,\frac{m(p+m)}{(p+2m)^2}\\
&=\frac{4n^2+4np-4n+(p-1)^2)}{n^2+np-n}\,\frac{mp+m^2}{p^2+4mp+4m^2}\\
&=\frac{n^2+np-n+(p-1)^2/4}{n^2+np-n}\,\frac{mp+m^2}{p^2/4+mp+m^2}\\
&= \Big(1 + \frac{(p-1)^2/4}{n^2+n(p-1)}\Big)\,
\Big(1+\frac{p^2/4}{m^2+mp}\Big)^{-1}.
\end{align*}
This expression is less than 1 for all $n\geq m$.
Putting it all together,

\[
\nu_0(n)\leq  
\frac{p}{n+p}\,\frac{e^{1/12}}{\sqrt{2\pi}}\,
\frac{\sqrt{2}}{\sqrt{n}}\,2^{p-1},
\quad n\geq m.
\]

\noindent
Let 
\[
K_1=p\,\frac{e^{1/12}}{\sqrt{2\pi}}\,
\sqrt{2}\,2^{p-1},
\]
then $\nu_0(n)\leq K_1 n^{-3/2}$ for all $n\geq m$.
Find $K_0$ such that $\nu_0(n) \leq K_0 n^{-3/2}$ 
for $n=1,\ldots,m$. 
Then by setting $K=\max\{K_0,K_1\}$,
\[
\nu_0(n) \leq K\frac{1}{n\sqrt{n}},\quad n\geq 1.
\]

\end{document}